\DeclareMathAlphabet{\mathcalligra}{T1}{calligra}{m}{n}
\DeclareSymbolFont{cyrletters}{OT2}{wncyr}{m}{n}
\DeclareMathSymbol{\Sha}{\mathalpha}{cyrletters}{"58}
\theoremstyle{plain}
\newtheorem*{theorem*}{Theorem}
\newtheorem*{lemma*} {Lemma}
\newtheorem*{corollary*} {Corollary}
\newtheorem*{proposition*} {Proposition}
\newtheorem{theorem}{Theorem}[section]
\newtheorem{proposition}[theorem]{Proposition}
\theoremstyle{remark}
\newtheorem*{definition}{Definition}
\theoremstyle{definition}
\def\part{\partial}
\def\bp{\begin{pmatrix}}
\def\ep{\end{pmatrix}}
\def\bn{\begin{enumerate}}
\def\en{\end{enumerate}}
\def\ba{\begin{array}}
\def\ea{\end{array}}
\def\fr12{\frac{1}{2}}
\def\cmtbf#1{} \def\cmt#1{}
\begin{document}

\title{A short note on p-adic families of Hilbert Modular Forms}
\author{Aftab Pande}

\begin{abstract}
We extend previous work of the author using an idea of Buzzard and give an elementary construction of non-ordinary $p$-adic families of Hilbert Modular Eigenforms. 
\end{abstract}

\maketitle

\section{Introduction}

The notion of $p$-adic analytic families of modular forms started with Serre \cite{S} using $p$-adic Eisenstein series. Hida (\cite{H2}, \cite{H3}) showed examples of cuspidal eigenforms of slope zero or ordinary cuspidal eigenforms. For non-ordinary cuspidal eigenforms, the conjectures of Gouvea-Mazur \cite{GM} asserted local constancy of the dimensions of the slope spaces. Then, Coleman \cite{C} showed that almost every overconvergent eigenform of finite slope lives in a $p$-adic family; these results were generalized by Coleman-Mazur \cite{CM} into a geometric object called the
eigencurve which is a rigid-analytic curve whose points correspond to
normalized finite-slope $p$-adic overconvergent modular eigenforms
of a fixed tame level $N$. In the case of Hilbert modular forms, Kisin-Lai \cite{KL} extended the construction of the eigencurve and showed that a finite slope Hilbert modular eigenform can be deformed into a one parameter family of finite slope eigenforms.

In this short note, we use an idea of Buzzard \cite{B} and give an elementary construction of a $p$-adic family of Hilbert Modular eigenforms. In previous work of the author \cite{P}, results on local constancy of slope $\alpha$ spaces of Hilbert modular forms were obtained in the spirit of the Gouvea-Mazur conjectures. Using the same setting as in \cite{P}, and assuming the dimensions of the slope $\alpha$ spaces is $1$, we are able to obtain a $p$-adic family of Hilbert Modular Forms. 

\begin{theorem}

Let $D(\mathbf{k},\alpha)$ be the number of eigenvalues of slope $\alpha$ of the $U_p$ operator acting on  $\mathcal{S}^{D}_{\mathbf{k}}(U,R)$, the space of Hilbert modular forms of weight $\mathbf{k}$. Assuming that the slope $\alpha$ spaces have dimension $1$ with $F_\mathbf{k}$ the unique eigenform of slope $\alpha$ and $a_\mathbf{k}(t)$ the eigenvalue of $t$ on $F_\mathbf{k}$ with $\kappa = \lfloor  c_1 n^{1/d+1} -1 - 3 \alpha \rfloor$, if $n > (\frac{\kappa + 1 + 3\alpha}{c_1})^{d+1}$and
$\mathbf{k},\mathbf{k'}$ sufficiently large are congruent to $\mathbf{k}_0$ and $\mathbf{k} \equiv \mathbf{k'} \mod p^{n -1}$  we have for all $t \in \mathbb{T}$, $a_{\mathbf{k}} (t) \equiv a_{\mathbf{k'}} (t) \mod p^{\kappa}$.

\end{theorem}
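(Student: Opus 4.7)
The plan is to follow the strategy of Buzzard \cite{B}, adapted to the Hilbert modular setting developed in the author's previous paper \cite{P}. The key input from \cite{P} is that for weights $\mathbf{k}, \mathbf{k}'$ congruent to $\mathbf{k}_0$ and with $\mathbf{k} \equiv \mathbf{k}' \bmod p^{n-1}$, the characteristic power series $P_\mathbf{k}(X)$ and $P_{\mathbf{k}'}(X)$ of $U_p$ acting on $\mathcal{S}^{D}_{\mathbf{k}}(U,R)$ and $\mathcal{S}^{D}_{\mathbf{k}'}(U,R)$ are congruent modulo a controllable power of $p$, and the dimensions $D(\mathbf{k},\alpha) = D(\mathbf{k}',\alpha) = 1$ agree. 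The quantitative form of the congruence is precisely what determines the relation $\kappa = \lfloor c_1 n^{1/(d+1)} - 1 - 3\alpha \rfloor$ and the growth condition $n > \left(\frac{\kappa+1+3\alpha}{c_1}\right)^{d+1}$.

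First, I would factor each characteristic power series in a slope-theoretic way: writing
\[
P_\mathbf{k}(X) = (1 - a_\mathbf{k}(U_p)^{-1} X) \cdot Q_\mathbf{k}(X),
\]
where $Q_\mathbf{k}$ corresponds to the complementary slopes. Using the Newton polygon arguments of Buzzard, the gap in slopes between $\alpha$ and the remaining slopes forces a unique such factorization over a sufficiently large extension, and the quantitative congruence of $P_\mathbf{k}$ with $P_{\mathbf{k}'}$ transfers to a congruence of the slope $\alpha$ linear factors. This yields $a_\mathbf{k}(U_p) \equiv a_{\mathbf{k}'}(U_p) \bmod p^\kappa$, with the $3\alpha$ correction arising from the usual precision loss in the Weierstrass-style factorization.

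To upgrade this from the single operator $U_p$ to an arbitrary Hecke operator $t \in \mathbb{T}$, I would apply the same factorization argument to the characteristic power series of a suitable composite, for instance $U_p \circ t$ or the family $U_p + \lambda t$, acting on $\mathcal{S}^{D}_{\mathbf{k}}(U,R)$. Since $F_\mathbf{k}$ is the unique slope $\alpha$ eigenform, the slope $\alpha$ factor of this twisted characteristic series is one-dimensional and records exactly the product $a_\mathbf{k}(U_p) \cdot a_\mathbf{k}(t)$ (respectively $a_\mathbf{k}(U_p) + \lambda a_\mathbf{k}(t)$). The same precision estimates as in the $U_p$ case then give congruence of these slope $\alpha$ factors mod $p^\kappa$; dividing out the already-known $U_p$ eigenvalue congruence (which is a $p$-adic unit up to $p^\alpha$, again accounting for part of the $3\alpha$ loss) produces the claimed $a_\mathbf{k}(t) \equiv a_{\mathbf{k}'}(t) \bmod p^\kappa$.

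The main obstacle I expect is the precision bookkeeping: one must ensure that the passage from ``characteristic series close mod $p^{n-1}$'' to ``individual slope $\alpha$ eigenvalues close mod $p^\kappa$'' does not eat more than $1 + 3\alpha$ units of $p$-adic accuracy, and that this holds uniformly in $t$. This is the role of the lower bound $n > \left(\frac{\kappa+1+3\alpha}{c_1}\right)^{d+1}$, which encodes both the totally real field degree $d$ (through the growth rate $n^{1/(d+1)}$ coming from the Koecher-type bound on Hilbert modular forms used in \cite{P}) and the slope $\alpha$ gap. The routine but delicate point will be to choose the weights ``sufficiently large'' so that this estimate, together with the hypothesis $D(\mathbf{k},\alpha) = 1$, is stable as $\mathbf{k}$ varies in the prescribed congruence class.
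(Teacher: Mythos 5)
Your overall framing (congruence of characteristic series plus Newton--polygon separation of the slope $\alpha$ segment) is plausible for the $U_p$-eigenvalue itself, but the step where you pass from $U_p$ to an arbitrary $t \in \mathbb{T}$ has a genuine gap. You assert that the ``slope $\alpha$ factor'' of the characteristic series of $U_p \circ t$ (or $U_p + \lambda t$) is one-dimensional because $F_\mathbf{k}$ is the unique slope $\alpha$ eigenform of $U_p$. That does not follow: the eigenvalue of $U_p\circ t$ on $F_\mathbf{k}$ has valuation $\alpha + v_p(a_\mathbf{k}(t))$, and nothing prevents other generalized eigenvectors of $U_p$, of $U_p$-slope strictly greater than $\alpha$, from having $U_p\circ t$-eigenvalues of exactly that same valuation. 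So the segment of the Newton polygon of $U_p\circ t$ containing the root you want need not have horizontal length one, the Weierstrass-style factorization does not isolate $a_\mathbf{k}(U_p)\,a_\mathbf{k}(t)$, and the congruence of characteristic series cannot be converted into a congruence of that particular root. Moreover, the quantitative separation bounds $c(L/(K+p^{n'}L)) = c_1 n^{1/(d+1)} - 1 - \beta$ from \cite{P} are established for $\xi = U_p$ via the containment $\xi(K)\subset p^nL$; your route would require re-proving analogous estimates for every composite $U_p\circ t$, uniformly in $t$, which is not addressed.

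The paper takes a different route that sidesteps this entirely: it invokes Buzzard's proposition, which works directly with the eigenvector rather than with characteristic polynomials of composites. One transports the slope $\alpha$ eigenvector $F'$ of the second weight to an element $H\in L$ via the $R[\xi,\psi]$-module isomorphism $L/K\cong L'/K'$ and studies the lattice $\Lambda = (\mathbb{Q}_pF + \mathbb{Q}_p\xi H)\cap L$. In the problematic case one perturbs $\xi$ to an operator $\tilde\xi$ congruent to it mod $p^{n'}$ but having two eigenvalues of slope $\alpha$, contradicting local constancy; the commuting operator $\psi = t$ enters essentially only through the relation $\psi\xi H \equiv a'\xi H$, so the congruence $a\equiv a'\bmod p^\kappa$ is obtained for all $t\in\mathbb{T}$ at once, with no spectral analysis of $U_p\circ t$ needed. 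The paper's proof of the theorem then reduces to checking the two numerical hypotheses $2\alpha+\kappa\le n$ and $\alpha < c(L/(K+p^{n'}L))$ for $n-2\alpha-\kappa<n'\le n$, which is exactly where the choice $\kappa = \lfloor c_1n^{1/(d+1)}-1-3\alpha\rfloor$ and the lower bound on $n$ come from. To repair your argument you would either need to prove the missing separation statement for $U_p\circ t$ or switch to this eigenvector-level lattice argument.
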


\section{Overview}
\begin{definition}
Let $c \in \mathbb{Z}_p$ and for $r \geq 0$, let $B(c,r) = \{ k \in
\mathbb{Z}_p : |k - c| < r \}$. Let $N$ be an integer prime to $p$.
Then a $p$-adic family of modular forms of level $N$ is a formal
power series:

\center{$\sum_{n \geq 0} F_n q^n$},

where each $F_n : B(c,r) \rightarrow \mathbb{C}_p$ is a $p$-adic
analytic function, with the property that for all sufficiently large (rational) integers $k$, each $\sum F_n(k) q^n$ is the
Fourier expansion of a modular form of weight $k$.
\end{definition}

The $p$-adic Eisenstein series $E_k^*(z) = E_k(z) -
p^{k-1}E_k(pz)$ are an example of a $p$-adic family of non-cuspidal eigenforms. 

After Hida's work on slope zero (ordinary) eigenforms Gouvea and Mazur \cite{GM} made some very precise conjectures
about the dimensions of the non-zero slope $\alpha$ spaces. Let $d(k,\alpha)$ be the
dimension of the slope $\alpha$ subspace of the space of classical
cuspidal eigenforms of weight $k$ for the $T_p$ operators. If $k_1 , k_2 > 2\alpha + 2$,
  and $k_1 \equiv k_2 \mod p^n(p-1)$
 then, $d(k_1, \alpha) = d(k_2, \alpha)$ (this condition is called local constancy). Buzzard and Calegari \cite{BC} later showed the conjecture is not true.

In \cite{P}, we obtained results on local constancy for Hilbert modular forms by finding bounds for the Newton polygons using methods of Buzzard \cite{B}. We give a brief description of Newton polygons here.

Let $L$ be a finite free $\mathbb{Z}_p$-module equipped with a
$\mathbb{Z}_p$-linear endomorphism $\xi$ and $\sum_{s=o}^{t} c_{s}X^{t-s}$ be the
characteristic polynomial of $\xi$ acting on $L \otimes \mathbb{Q}_{p}$.
Let $C$ denote the convex hull of the points $(i, v_{p}(c_{i}))$ in $\mathbb{R}^{2}$, for $0 \leq i \leq t$,
ignoring the $i$ for which $c_{i}= 0$. The Newton polygon of $\xi$ on $L$ is the lower
faces of $C$, that is the union of the sides forming the lower of
the two routes from $(0,0)$ to $(t, v_{p}(c_{t}))$ on the boundary
of $C$. If the Newton polygon has a
side of slope $\alpha$ whose projection onto the $x$ axis has
length $n$, then there are precisely $n$ eigenvectors of $\xi$ with
$p$-adic valuation equal to $\alpha$.

Our strategy is as follows:

$L$ will correspond to a space of automorphic forms and we will define $K$ to be a submodule of $L$
such that $L/K \equiv \oplus O/p^{a_{i}}O$ with the $a_{i}$
decreasing and $a_{i} \leq n$, where $n$ is fixed. We consider the characteristic polynomial $p(x)$ of $\xi$ (which corresponds to the $U_p$ operator) acting on $L$ and plot
its Newton polygon. We let $L'$ be a space of forms corresponding to a different weight, and choose $K'$ to be a submodule similar to $K$ such that modulo a certain power of $p$ the
spaces $L/K$ and $L'/K'$are isomorphic. This isomorphism leads to congruences of the
coefficients of the respective characteristic polynomials. 
This tells us that the Newton polygons of fixed slope coincide which gives us
local constancy of the slope $\alpha$ spaces. When the dimension is assumed to be one, there exists a unique eigenform $f \in L$ for $U_p$ with eigenvalue of slope $\alpha$. Using a result of Buzzard we show that these $f$'s form a family of eigenforms.

\section{Definitions and setup}

We refer the reader to \cite{P} for the details and summarize the results here.

Let $F$ be a totally real field, $[F:\mathbb{Q}] = d$, where $d$ is
even with $D$ a totally definite quaternion algebra over $F$,
unramified at all finite places and fix $O_{D}$ to be a maximal
order of $D$. Fix an isomorphism $D \otimes_F K \cong M_2
(K)$, where $K$ is a Galois extension of $\mathbb{Q}$, which splits $D$,
with $F \subseteq K$.

Fix $\mathbf{k} = (k_{\tau}) \in \mathbb{Z}^{I}$ such that each component $k_{\tau}$
is $\geq 2$ and all components have the same parity. Set
$\mathbf{t} = (1,1,...,1) \in Z^{I}$ and set $\mathbf{m} = \mathbf{k} - 2\mathbf{t}$. 
Also choose $\mathbf{v} \in
Z^{I}$ such that each $v_{\tau} \geq 0$, some $v_{\tau} = 0$ and $\mathbf{m}
+ 2\mathbf{v} = \mu \mathbf{t}$ for some $\mu \in \mathbb{Z}_{\geq 0}$.   Let $\mathbb{A}$ be the ring of adeles and $G = Res_{F/\mathbb{Q}} D^{*}$ the algebraic group defined by restriction of scalars. 

Let $R$ be any commutative ring. For any $R$-algebra $A$ and for
$a,b \in \mathbb{Z}_{\geq 0}$, we let $S_{a,b}(A)$ denote the $M_2(R)$-module
$Symm^a(A^2)$ (the $a^{th}$ symmetric power) with an action by $M_2(R)$ given by $x\alpha = (det \alpha)^{b}xS^{a}(\alpha)$. If $A^2$ has a natural basis $e_1, e_2$, then $S_{a,b}(A)$ has a
basis $f_0,...,f_a$ where each $f_i = e_1^{\otimes i} \otimes
e_2^{\otimes (a-i)}$
If $\mathbf{k} \in \mathbb{Z}[I]$ and $\mathbf{m}, 
\mathbf{v}, \mu$ are as before we set 
$L_{\mathbf{k}} = \otimes_{\tau \in I} S_{m_\tau, v_\tau }(\mathbb{C})$. If $R$ is a ring such that $O_{K,v} \subseteq R$, for some $v | p$,
then,  $L_{\mathbf{k}}(R) = \otimes_{\tau \in I} S_{m_{\tau},v_{\tau}}(R) $

We let $p$ be a rational prime which is inert in $K$, $M$ be the semigroup in $M_2(O_{F,p})$ consisting of matrices
$\bp a &b\\c&d\ep$ such that $c \equiv 0 \mod p$ and $d \equiv 1
\mod p$.  Let $U \subseteq G_{f}$ (where $G_f = G(\mathbb{A}_f)$ with $\mathbb{A}_f$ the finite adeles) be
an open compact subgroup such that the projection to $G(F_p)$ lies
inside $M$. If $u \in U$, let $u_{p} \in G(F_{p})$  denote the image under the
projection map.

If $ f: G(\mathbb{A}) \rightarrow L_{\mathbf{k}}(R)$ and $u =
u_{f}.u_{\infty} \in G(\mathbb{A})$ then the weight $\mathbf{k}$ operator is defined as:

$(f|_{\mathbf{k}}u)(x) = u_{\infty}f(x.u^{-1})$, when $R = \mathbb{C}$.

$(f||_{\mathbf{k}}u)(x) = u_{p}f(x.u^{-1})$, when $R$ is an $O_{K,p}$-algebra.

Using the definition of the weight $\mathbf{k}$ operator we can define the space of automorphic forms for $D$, of level $U$ and weight $\mathbf{k}$ as:

$\mathcal{S}_{\mathbf{k}}^D(U) = \{ f: D^{*} \setminus G(\mathbb{A})
\rightarrow L_{\mathbf{k}} \mid f |_{\mathbf{k}}u = f , \forall u \in U\}$ 

$= \{f: G_{f}/U \rightarrow L_{\mathbf{k}} \mid f(\alpha.x) = \alpha.f(x),
\forall  \alpha \in D^{*} \}$

$\mathcal{S}^{D}_{\mathbf{k}}(U, R) = \{ f: D^{*} \setminus G(\mathbb{A})
\rightarrow L_{k}(R) \mid  f ||_{\mathbf{k}} u = f , \forall u \in U \}$.

The purpose of introducing $\mathcal{S}^{D}_{\mathbf{k}}(U, R)$ is to give
$\mathcal{S}^{D}_{\mathbf{k}}(U)$ an integral structure which allows us to think of $\mathcal{S}^{D}_{\mathbf{k}}(U, R)$ as $\oplus_{\gamma_i \in X(U)} (\gamma_i  L_k(R))^{D^{*} \cap \gamma_i
U \gamma_i^{-1}}$. Thus, we see that $S_{\mathbf{k}}^{D}(U,R)$ is an $R$-lattice in
$S_{\mathbf{k}}^{D}(U)$.

Let $X(U) = D^{*}\backslash G_{f} / U$. We know this is finite, so let $h = |X(U)|$
and let $\{\gamma_{i} \}_{i =1}^{h}$ be the coset
representatives. Hence we can write $G_{f} = \coprod_{i = 1}^{h} D^{*}.\gamma_{i}.U$.

Define $\widetilde{\Gamma^{i}(U)} := D^{*} \cap
\gamma_{i}.U.G_{\infty}^{D}.\gamma_{i}^{-1}$ and let $\Gamma^{i}(U) := \widetilde{\Gamma^{i}(U)}/\widetilde{\Gamma^{i}(U)} \cap F^{*}$.

Due to a result by Hida \cite{H1} (Sec $7$), $U$ can be chosen such that the
${\Gamma^{i}(U)}$ are torsion free for all $i$. Coupled
with the statements above, this means that the ${\Gamma^{i}(U)}$ are trivial, provided $U$ is chosen
carefully.

\section{Results}

We follow the description in \cite{B}. Let $L$ be a $R[\xi,\psi]$-module which is  finite and free over $R$, $K$ a submodule of $L$ of finite index so that $L/K \cong \oplus_{i=1}^r O/ p^{a_i} O$ ($a_i \leq n$) and $\xi(K) \subset p^n L$. Let $L'$ be another $R[\xi,\psi]$-module with $K'$ a submodule of finite index such that $\xi(K') \subset p^n L'$ and $L/K \cong L'/K'$ as $R[\xi,\psi]$-modules.  

Define $B(j) = \sum_{i=1}^{j} b(i)$ where $b_i = n - a_i$, and $T(j) = M + B(j-1)$, where $M$ is the smallest integer such that $2M \geq n$. These functions $B$ and $T$ can be thought of as the top and bottom boundaries of the Newton Polygon of $\xi$ acting on $L$.

Let $c(L/K) = \inf_{i \geq 0} \{T(i)/i\}$. This will be the slope of the largest line through the origin that does not lie above $T$. Assume that  for all integers $n'$ with $n - 2\alpha \leq n' \leq n$ we have $\alpha < c(L/ (K + p^{n'} L))$. Let $\kappa$ be any positive integer such that $\kappa \leq n - 2\alpha$ and $\alpha < c(L/ (K + p^{n'} L))$ for $n'$ where $n - 2\alpha - \kappa < n' \leq n $.

Let $p^{\alpha}u, p^{\alpha}u'$ be the roots of slope $\alpha$ of the characteristic polynomials of $\xi$ acting on $L,L'$. Choose $F \in L$ but $F \notin pL$ such that $\xi (F) = p^{\alpha}uF$ and $F' \in L'$  ($F' \notin p L'$) using the isomorphism of $L/K \cong L'/K'$ such that $\xi (F') = p^{\alpha'}u' F'$.  Since $\xi$ and $\psi$ commute, $F,F'$ are eigenvectors for $\psi$ as well. Let $a$ be the eigenvalue of $\psi$ acting on $F$ and $a'$ the eigenvalue of $\psi$ on $F'$. In \cite{B}, he proved the following proposition. 

\begin{proposition}

a and a' are congruent modulo $p^{\kappa}$.

\end{proposition}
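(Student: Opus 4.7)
The plan is to push the $\psi$-eigenvalue equations for $F$ and $F'$ through the isomorphism $\phi \colon L/K \xrightarrow{\sim} L'/K'$ to derive $(a - a')F \in K$, and then to sharpen this inclusion to $(a - a')F \in p^{n-\alpha}L$ by applying $\xi$ and invoking the two defining hypotheses $\xi(K) \subset p^n L$ and $\xi F = p^\alpha u F$.

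I interpret the stipulation that ``$F' \in L'$ is chosen using the isomorphism of $L/K \cong L'/K'$'' as a normalization giving $F \equiv F' \pmod{K}$ under $\phi$. Granting this, since $\phi$ is an isomorphism of $R[\xi,\psi]$-modules it commutes with $\psi$, and applying $\psi$ preserves the congruence:
$$aF \;\equiv\; \psi F \;\equiv\; \psi F' \;\equiv\; a'F' \;\equiv\; a'F \pmod{K},$$
which rearranges to $(a - a')F \in K$. Applying $\xi$, using $\xi F = p^\alpha u F$ with $u \in R^\times$, and using $\xi(K) \subset p^n L$, we find
$$p^\alpha u\,(a - a')\,F \;=\; \xi\bigl((a - a')F\bigr) \;\in\; \xi(K) \;\subset\; p^n L,$$
so $(a - a')F \in p^{n-\alpha}L$. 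Since $F \notin pL$, at least one coordinate of $F$ in a free $R$-basis of $L$ is a unit, whence $v_p(a - a') \geq n - \alpha$. Because $\kappa \leq n - 2\alpha \leq n - \alpha$, we conclude $a \equiv a' \pmod{p^\kappa}$.

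The main obstacle is justifying the normalization $F \equiv F' \pmod{K}$ itself. Since $F$ and $F'$ are $\xi$-eigenvectors on $L$ and $L'$ with possibly distinct unit factors $u$ and $u'$, the images $\phi(\bar F)$ and $\bar{F'}$ in $L'/K'$ are $\xi$-eigenvectors for different eigenvalues and cannot literally coincide. The content of the Newton polygon hypothesis $\alpha < c(L/(K + p^{n'}L))$ on $n - 2\alpha - \kappa < n' \leq n$ is precisely to enable a Hensel-type lifting of the slope-$\alpha$ eigenvector $\bar F$ through $\phi$ to a slope-$\alpha$ eigenvector of $L'$ whose discrepancy with $F'$ is small enough in $K$-adic precision for the amplification above to still yield $a \equiv a' \pmod{p^\kappa}$, which is exactly the argument carried out in \cite{B}.
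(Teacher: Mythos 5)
There is a genuine gap, and you have put your finger on it yourself: the entire argument rests on the normalization $\phi(\overline{F}) = \overline{F'}$ in $L'/K'$, which is not part of the hypotheses and is generally false (as you note, the two classes are eigenvectors of $\xi$ on $L'/K'$ for different eigenvalues). The closing paragraph does not repair this; it asserts that a ``Hensel-type lifting'' enabled by the Newton polygon hypothesis makes the discrepancy small enough, and defers to \cite{B}. But that comparison between the slope-$\alpha$ eigenvector of $L$ and the one of $L'$ \emph{is} the content of the proposition, so the proposal proves the easy amplification step while leaving the actual difficulty as a citation. A symptom of the problem is that your conclusion $v_p(a-a') \geq n-\alpha$ is stronger than what is actually true: the correct bound even in the most favorable case is $p^{n-2\alpha} \mid (a-a')$, because the genuine input is a congruence involving $\xi H$ (which loses a factor of $p^{2\alpha}$ from the two slope-$\alpha$ eigenvalues), not the congruence $(a-a')F \in K$ you posit.

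The paper's proof proceeds differently and cannot be collapsed into your normalization. One takes $H \in L$ lifting $\phi^{-1}(\overline{F'})$, forms the lattice $\Lambda = (\mathbb{Q}_p F + \mathbb{Q}_p \xi H) \cap L$, and splits into cases. If $\Lambda$ has rank one, $\xi H$ is proportional to $F$ and the congruence $(a-a')\xi H \in p^n L$ follows from $\xi(K) \subset p^n L$, giving $p^{n-2\alpha} \mid (a-a')$. If $\Lambda$ has rank two, one writes $\xi H = \lambda F + \mu G$ for a basis $\{F,G\}$ of $\Lambda$: when $v_p(\mu) \geq 2\alpha+\kappa$ the congruence is read off from the $F$-component; when $v_p(\mu) < 2\alpha+\kappa$ one builds a perturbation $\tilde{\xi}$ of $\xi$ with \emph{two} eigenvalues of slope $\alpha$ that agrees with $\xi$ modulo $p^{n'}$, and the hypothesis $\alpha < c(L/(K+p^{n'}L))$ together with local constancy forces a contradiction with $\xi$ having a one-dimensional slope-$\alpha$ space. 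Your proposal never engages with the possibility that $\xi H$ has a substantial component off the line $\mathbb{Q}_p F$, which is exactly where the Newton polygon hypothesis is used; without that case analysis the proof is not complete.
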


\begin{proof} 
We give a sketch of Buzzard's proof.

Let $H \in L$ such that $H + K$ maps to $F' +K'$ under the isomorphism $L/K \cong L'/K'$. Consider the lattice $\Lambda = (\mathbb{Q}_p F + \mathbb{Q}_p \xi H) \cap L $. If it has rank one, then $\xi H$ is a multiple of $F$ which means  $\psi \xi H = a \xi H \Rightarrow (a-a')$  $\xi H \in p^n L \Rightarrow p^{n-2\alpha} | (a-a') \Rightarrow a \equiv a' \mod p^{\kappa}$ as $\kappa \leq n - 2\alpha$. 

So we can assume $\Lambda$ has rank $2$ and extend $F$ to a basis $\{F,G\}$ of $\Lambda$ and write $\xi H = \lambda F + \mu G$ with $\lambda, \mu \in \mathbb{Z}_p$. Let $\mu = p^{\beta}u''$. There are two cases:

$\beta \geq 2 \alpha + \kappa$:

Applying $\psi$ to $\xi H = \lambda F + \mu G$, we get that the lines $\lambda a' F + \mu a' G , \lambda a F + \mu \psi G $ are congruent $\mod p^{n}$. Reducing $\mod p^{2\alpha + \kappa}$, we get that $(a-a')\lambda F \in p^{2 \alpha +\kappa}$. We assumed that $F \notin pL$ so we see that $a \equiv a' \mod p^{\kappa}$. 

$\beta < 2\alpha + \kappa$:

By the definition of $\Lambda$, $L/\Lambda$ is torsion-free so we can extend ${F,G}$ to a basis $\{F,G,l_1,..l_r\}$ of $L$. Define $\tilde{\xi}: L \rightarrow L$ by $\tilde{\xi}(F) = p^{\alpha}uF$, $\tilde{\xi}(G) = \nu F + p^{\alpha}u'G$ and $\tilde{\xi}(l_i) = \xi l_i$ for all $i$ so we see that $\tilde{\xi}$ has $2$ eigenvalues of slope $\alpha$ .  Let $n' = n -\beta$, then ${\xi}(K +p^{n'}L) \subseteq p^{n'}L \subseteq K$ and $\tilde{\xi}$ and $ \xi$ are congruent $\mod p^{n'}$. If $\alpha < c(L/(K + p^{n'}L))$ , then due to the result on local constancy the number of eigenvalues of $\xi$ and $ \tilde{\xi}$ of slope $\alpha$ must be equal. But $\xi$ only has one eigenvalue of slope $\alpha$ so we have a contradiction as we chose $\alpha < c(L/(K + p^{n'}L))$ for $\kappa$ where $n - 2 \alpha - \kappa < n' \leq n$.

\end{proof}

Now we adapt Buzzard's method to the case of Hilbert modular forms. We fix a prime $p \in \mathbb{Q}$ which is inert in $K$, let $R = O_{K,p} $ and assume that $\Gamma^i (U)$ are trivial. 

In order to do some computations on the space of automorphic forms we can think of 

$L_{\mathbf{k}}(R) = L_{k_{1}}(R) \otimes L_{k_{2}}(R)
\otimes ...... \otimes L_{k_{d}}(R)\otimes det()^{n_k}$, where $det()^{n_k}$ accounts for the twist by determinants and
$L_{k_i}(R)$ are the $k_i$th symmetric powers.

 We define $W_{\mathbf{k}}^{n}(R)$ (where $n \leq k_i$)to be generated by the submodules  
 
 $W_{k_{1}}^{n}(R) \otimes L_{k_{2}}(R) \otimes ...... \otimes
L_{k_{d}}(R)\otimes det()^{n_k}$ , $L_{k_{1}}(R) \otimes
W_{k_{2}}^{n}(R) \otimes ...... \otimes L_{k_{d}}(R)\otimes
det()^{n_k}$ and up to $L_{k_{1}}(R) \otimes L_{k_{2}}(R) \otimes ...... \otimes
W_{k_{d}}(R)\otimes det()^{n_k}$, where each $W_{k_{i}}^{n}(R)$ is generated by the $(n+1)$ $R$
submodules $\{ p^{n-j} x^{j} L_{k-j}(R) \}_{j=0}^{n}$. 

Let $L$ correspond to $\mathcal{S}_{\mathbf{k}}^{D}(U,R)$,  $K$ to $\mathcal{W}_{\mathbf{k}}^{D}(U,R)$ equipped with an $R$-linear endomorphism $\xi$ (which corresponds to the $U_p$ operator) and $\psi$ as the $t$-operator. In \cite{P}, by the definition of $\mathcal{W}_{\mathbf{k}}^{D}(U,R)$ and the choice of the $U_p$ operator, we know that $\xi(K) \subset p^n L$ and

$L/K = \mathcal{S}^{D}_{\mathbf{k}}(U,R)/ \mathcal{W}_{\mathbf{k}}^{D}(U,R) \cong
\oplus_{i=1}^{h}L_{\mathbf{k}}(R)/\oplus_{i=1}^{h}W^{n}_{\mathbf{k}}(R) \cong \underbrace{\oplus (O/p^{n}O)}_{\sigma_1 \
\hbox{\scriptsize times}} \underbrace {\oplus
(O/p^{n-1}O)}_{\sigma_2 \ \hbox{\scriptsize times}} .....
\underbrace{\oplus (O/p^{1}O))}_{\sigma_{n} \ \hbox{\scriptsize
times}} \cong \oplus_{i=1}^r O/p^{a_{i}}O$, for all
$\mathbf{k}=(k_{1},k_{2},...,k_{d})$ and where $a_1 \geq a_2 \geq
...$.

To use the proposition above we first need to compute $c(L/(K + p^{n'}L))$.

In \cite{P}, based on the structure of $L/K$ we saw that $B(x)$ was a piecewise linear function with slope $r$ for $r^d h \leq x \leq (r+1)^d h$. We chose a polynomial $q(x) = (x/h)^{1/d} - 1 < B'(x)$ and then computed $Q(x) = \int q(y)dy$ so that $Q(x) < B(x)$ and $P(x): = M + Q(x) < T(x)$. Computing the minimum of $P(x)/x$ we saw that $ c_1 n^{1/(d+1)} - 1 < P(x)/x < T(x)/x$ and found the value of  $c(L/K) = \inf \{T(x)/x\}  = min \{ c_1 n^{1/(d+1)}, n \}$, where $c_1 = (\frac{1}{d+1})^{d/(d+1)}(\frac{1}{h^{d/(d+1)}} + 1)$. 
 
We see that $L/(K + p^{n'}L) \cong \underbrace{\oplus (O/p^{n'}O)}_{\sigma_1 + \sigma_2 +...+ \sigma_{\beta +1} \
\hbox{\scriptsize times}} \underbrace {\oplus
(O/p^{n'-1}O)}_{\sigma_{\beta} \ \hbox{\scriptsize times}} .....
\underbrace{\oplus (O/p^{1}O))}_{\sigma_{n} \ \hbox{\scriptsize
times}}$, where $\beta = n - n'$.

Using the same methods as in \cite{P}, we see that $T(x)$ is a piecewise linear function with slope $r$ for $(r+\beta)^d h \leq x \leq (r + \beta + 1)^d h$.  Thus  $c(L/(K + p^{n'}L) =  c_1 n^{1/(d+1)} - 1 - \beta$, where $c_1$ is as above. 

 In \cite{P} the following theorem was proved:

\begin{theorem}
Let $D(\mathbf{k},\alpha)$ be the number of eigenvalues of the $p^{- \sum
v_i}T_{p}$ operator acting on $\mathcal{S}^{D}_{\mathbf{k}}(U,R)$.
Let $ \alpha \leq c_1 n^{1/(d+1)} + c_2 \Rightarrow n \geq \lfloor
(\beta_{1} \alpha - \beta_{2})^{d+1} \rfloor = n(\alpha)$. If $\mathbf{k}, \mathbf{k'} \geq n(\alpha)$, $\mathbf{k} \equiv \mathbf{k'} \mod p^{n(\alpha)}$ and $\gamma^i$ are trivial, then $D(\mathbf{k},\alpha) =
D(\mathbf{k'},\alpha) $. (The $\beta_i$ are constants which depend only on $\alpha$ and $n$)
\end{theorem}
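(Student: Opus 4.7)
The plan is to deduce this as a direct application of the Newton polygon framework recalled in Section~3, applied to the pair of $R[\xi]$-lattices coming from weights $\mathbf{k}$ and $\mathbf{k'}$. Concretely, I would set $L = \mathcal{S}^{D}_{\mathbf{k}}(U,R)$ with submodule $K = \mathcal{W}_{\mathbf{k}}^{D}(U,R)$, take $\xi = p^{-\sum v_i} T_p$, and similarly $(L', K', \xi')$ for $\mathbf{k'}$. By the computation in \cite{P} already quoted above, $\xi(K) \subset p^n L$ and $\xi'(K') \subset p^n L'$, and the quotient $L/K$ decomposes as $\bigoplus_{i=1}^r O/p^{a_i}O$ with a shape of multiplicities $(\sigma_j)$ determined by the same combinatorial recipe in both weights. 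The objective is then to compare Newton polygons of $\xi$ and $\xi'$ on their respective lattices up to slope $\alpha$, from which the counts $D(\mathbf{k},\alpha)$ and $D(\mathbf{k'},\alpha)$ read off directly as the horizontal lengths of the slope-$\alpha$ sides.

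The heart of the argument is to produce a $\xi$-equivariant isomorphism $L/K \cong L'/K'$ from the weight congruence $\mathbf{k} \equiv \mathbf{k'} \bmod p^{n(\alpha)}$. Using the triviality of the $\Gamma^{i}(U)$ one identifies $\mathcal{S}^{D}_{\mathbf{k}}(U,R)$ with $\bigoplus_{i=1}^h L_{\mathbf{k}}(R)$, and likewise for $\mathbf{k'}$, which reduces the question to a purely coefficient-module comparison of tensor products of symmetric powers $S_{m_\tau,v_\tau}(R)$. The submodules $W_{k_i}^{n}(R) = \sum_{j=0}^{n} p^{n-j}x^{j} L_{k-j}(R)$ are built so that each $S_{m_\tau,v_\tau}(R)$ taken modulo the appropriate power of $p$ depends only on $m_\tau$ modulo a comparable power of $p$; combining this over $\tau$ and over the $h$ coset representatives yields the desired isomorphism, exactly as in \cite{P}. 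The isomorphism together with $\xi(K)\subset p^n L$ and $\xi'(K')\subset p^n L'$ then forces the characteristic polynomials of $\xi$ on $L$ and $\xi'$ on $L'$ to be congruent modulo a sufficiently high power of $p$.

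Finally I would invoke the Newton polygon bound $c(L/(K+p^{n'}L)) = c_1 n^{1/(d+1)} - 1 - (n - n')$ computed just above the theorem statement, choosing the constants $\beta_1,\beta_2$ so that the hypothesis $n \geq n(\alpha) = \lfloor (\beta_1\alpha - \beta_2)^{d+1}\rfloor$ is equivalent to $\alpha < c(L/(K+p^{n'}L))$ for every $n'$ in the window $n - 2\alpha \leq n' \leq n$. Under this inequality, the portion of the Newton polygon of slope at most $\alpha$ lies strictly below the top boundary $T(x)$, so it is pinned down exactly by the coefficients of the characteristic polynomial modulo the known power of $p$; since these coefficients agree for $\xi$ and $\xi'$, the two polygons coincide on this range of slopes and in particular the horizontal length at slope $\alpha$ matches, giving $D(\mathbf{k},\alpha) = D(\mathbf{k'},\alpha)$. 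The main obstacle will be the careful bookkeeping: one must calibrate $\beta_1,\beta_2$ (in terms of the explicit $c_1 = (1/(d+1))^{d/(d+1)}(1/h^{d/(d+1)} + 1)$ computed in \cite{P}) so that a single threshold $n(\alpha)$ simultaneously controls both the quality of the isomorphism $L/K\cong L'/K'$ and the strict inequality $\alpha < c(L/(K+p^{n'}L))$ uniformly across the required range of $n'$.
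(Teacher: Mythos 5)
Your proposal follows exactly the strategy the paper itself outlines and attributes to \cite{P}: identify $\mathcal{S}^{D}_{\mathbf{k}}(U,R)$ with $\oplus_{i=1}^{h} L_{\mathbf{k}}(R)$ using triviality of the $\Gamma^{i}(U)$, build the $\xi$-equivariant isomorphism $L/K \cong L'/K'$ from the weight congruence via the submodules $W^{n}_{\mathbf{k}}(R)$, deduce congruences of characteristic polynomials, and read off the slope-$\alpha$ multiplicity from the Newton polygon using the bound $c(L/K) = \min\{c_1 n^{1/(d+1)}, n\}$. This is essentially the same approach; the only (harmless) overshoot is that for this local-constancy statement one needs only $\alpha < c(L/K)$, i.e.\ the case $n' = n$, rather than the full window of $n'$ that is required later for Buzzard's eigenvalue-congruence proposition.
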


We can now state and prove our main theorem.

\begin{theorem}
Assuming the conditions of the previous theorem, let us further assume that there exists $d_0$ such that for all $\mathbf{k}$ where $\mathbf{k} \equiv \mathbf{k_0} \mod p^{n(\alpha)}$ we have $D(\mathbf{k},\alpha)= d_0$. If $d_o = 1$ let $F_k$ denote the unique up to scalar form of slope $\alpha$ which is an eigenform for all $t \in \mathbb{T}$ with $a_k(t)$ the eigenvalue of $t$ on $F_k$. Choose $\kappa = \lfloor  c_1 n^{1/d+1} -1 - 3 \alpha \rfloor$. Then if $n > (\frac{\kappa + 1 + 3\alpha}{c_1})^{d+1}$and
$\mathbf{k},\mathbf{k'} \geq n(\alpha)$ congruent to $\mathbf{k}_0$ and $\mathbf{k} \equiv \mathbf{k'} \mod p^{n -1}$  we have for all $t \in \mathbb{T}$, $a_{\mathbf{k}} (t) \equiv a_{\mathbf{k'}} (t) \mod p^{\kappa}$.

\end{theorem}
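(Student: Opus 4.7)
The plan is to apply the Buzzard-style Proposition stated above once for each Hecke operator $t \in \mathbb{T}$. Concretely, I would take $L = \mathcal{S}^D_{\mathbf{k}}(U,R)$, $L' = \mathcal{S}^D_{\mathbf{k'}}(U,R)$, $K = \mathcal{W}^D_{\mathbf{k}}(U,R)$, $K' = \mathcal{W}^D_{\mathbf{k'}}(U,R)$, $\xi = U_p$, and $\psi = t$. From the results already recorded, $L/K \cong \bigoplus_i O/p^{a_i}O$ with $a_i \leq n$ and $\xi(K) \subset p^n L$, and the weight congruence $\mathbf{k} \equiv \mathbf{k'} \bmod p^{n-1}$ produces the isomorphism $L/K \cong L'/K'$ of $R[\xi,\psi]$-modules. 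Under the dimension-one assumption $d_0 = 1$, the unique (up to scalar) slope $\alpha$ eigenvector $F_{\mathbf{k}}$ of $U_p$ is automatically an eigenvector for every $t \in \mathbb{T}$ (Hecke operators commute with $U_p$), so $F_{\mathbf{k}}, F_{\mathbf{k'}}$ play the roles of $F, F'$ in the Proposition, with $\psi$-eigenvalues $a_{\mathbf{k}}(t)$ and $a_{\mathbf{k'}}(t)$.

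Next I would verify the two numerical hypotheses of the Proposition for $\kappa = \lfloor c_1 n^{1/(d+1)} - 1 - 3\alpha \rfloor$. The inequality $\kappa \leq n - 2\alpha$ follows from the assumption $n > ((\kappa + 1 + 3\alpha)/c_1)^{d+1}$ together with the elementary estimate $c_1 n^{1/(d+1)} \leq n - \alpha$ valid for $n$ in this range. The principal numerical content is the slope bound $\alpha < c(L/(K + p^{n'}L))$ for every $n'$ with $n - 2\alpha - \kappa < n' \leq n$. Using the formula $c(L/(K + p^{n'}L)) = c_1 n^{1/(d+1)} - 1 - \beta$ (where $\beta = n - n'$) derived above, the range $n' > n - 2\alpha - \kappa$ becomes $\beta < 2\alpha + \kappa$, so
\[
c(L/(K + p^{n'}L)) > c_1 n^{1/(d+1)} - 1 - (2\alpha + \kappa) \geq \alpha,
\]
the last inequality being precisely the defining choice of $\kappa$.

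With both hypotheses in place, the Proposition yields $a_{\mathbf{k}}(t) \equiv a_{\mathbf{k'}}(t) \bmod p^{\kappa}$ for the chosen $t$. Since the same pair $F_{\mathbf{k}}, F_{\mathbf{k'}}$ serves as the $\psi$-eigenvector for every element of $\mathbb{T}$, and since $t$ was arbitrary, the conclusion follows for all $t \in \mathbb{T}$. The main obstacle I expect is structural rather than estimative: one must confirm that the isomorphism $L/K \cong L'/K'$ constructed earlier is simultaneously compatible with every $t \in \mathbb{T}$, not only with $U_p$. At primes away from $p$ this is essentially immediate since the Hecke action on the relevant lattice is weight-independent, but this compatibility, together with the commutativity needed to conclude that $F_{\mathbf{k}}$ is a simultaneous eigenform, should be spelled out explicitly before invoking Buzzard's framework at each $t$.
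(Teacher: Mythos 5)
Your proposal is correct and follows essentially the same route as the paper: both apply Buzzard's proposition with $\xi = U_p$, $\psi = t$, and reduce the theorem to the two numerical checks $2\alpha + \kappa \leq n$ and $\alpha < c(L/(K+p^{n'}L)) = c_1 n^{1/(d+1)} - 1 - \beta$ for $\beta = n - n' < 2\alpha + \kappa$, each verified from the defining choice of $\kappa$ exactly as you do. Your closing remark that the isomorphism $L/K \cong L'/K'$ must be checked to be compatible with the full Hecke algebra, not just $U_p$, is a legitimate point that the paper leaves implicit.
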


\begin{proof}

To prove the theorem we will use Buzzard's proposition. Thus, we need to verify that $2\alpha + \kappa \leq n$ and that for all integers $n'$ where $n - 2\alpha - \kappa < n' \leq n$ we have that $\alpha < c(L/ (K+ p^{n'}L))$.

To show that $2\alpha + \kappa \leq n$, we know that 

$\kappa < c_1 n^{1/d+1} -1 -3\alpha \Rightarrow 2\alpha +\kappa < c_1 n^{1/d+1} -1 -\alpha \Rightarrow 2\alpha +\kappa < c_1n^{1/d+1}$, where $c_1 = (\frac{1}{d+1})^{d/(d+1)}(\frac{1}{h^{d/(d+1)}} + 1)$.  

Now $c_1n^{1/d+1} =  (\frac{1}{d+1})^{d/d+1}(\frac{1}{h^{d/d+1}} +1)n^{1/d+1} \leq (\frac{1}{(d+1)h} +\frac{1}{d+1})^{d/d+1}n^{1/d+1} < n^{1/d+1} <  n$. 

For the second assertion, let $\beta = n - n'$, so we need to show that $\beta < 2\alpha +\kappa$. 

$\beta < 2\alpha +\kappa \Rightarrow \alpha < c(L/ (K+ p^{n'}L)) =   c_1 n^{1/(d+1)} - 1 - \beta$. 

We look at the choice of $\kappa =  \lfloor  c_1 n^{1/d+1} -1 - 3 \alpha \rfloor$. 

If $\beta < 2\alpha + \kappa $, this means that $\beta <  c_1 n^{1/d+1} -1 -  \alpha \Rightarrow  \alpha <  c_1 n^{1/d+1} -1 -  \beta $, so we are done.

\end{proof}

\section{Concluding Remarks}

\begin{itemize}
\item In \cite{B}, the result was established for classical modular forms ($d=1$). Substituting $d=1$ in our calculations would give us the bound for $n$ in terms of $\alpha^2$ in place of $\alpha^{d+1}$.

\item Hida \cite{H1} provided examples of families of Hilbert Modular forms of slope zero (or ordinary Hilbert Modular forms).

\item The existence of families of non ordinary Hilbert Modular forms is due to \cite{KL} and the results we obtain here are much weaker as we prove only continuity, not analyticity, of the families.  See also Yamagami \cite{Y} for similar results using rigid analytic methods.

\end{itemize}

Aftab Pande,

Universidade Federal do Rio de Janeiro,

Rio de Janeiro,

Brasil.

aftab.pande@gmail.com

\end{document}